\documentclass[12pt,reqno]{amsart}
\usepackage[latin1]{inputenc}
\usepackage{mathptmx}
\usepackage{pstricks}

\def\vertex{\pscircle[fillstyle=solid,fillcolor=black]{0.05}}

\newtheorem{lemma}{Lemma}[section]
\newtheorem{corollary}[lemma]{Corollary}
\newtheorem{theorem}[lemma]{Theorem}
\newtheorem{proposition}[lemma]{Proposition}

\theoremstyle{definition}

\newtheorem{remark}[lemma]{Remark}

\newtheorem{question}[lemma]{Question}

\def\ini{\operatorname{in}}
\def\Hilb{\operatorname{Hilb}}
\def\Hom{\operatorname{Hom}}
\def\Proj{\operatorname{Proj}}
\def\Spec{\operatorname{Spec}}
\def\relint{\operatorname{int}}
\def\conv{\operatorname{conv}}
\def\vert{\operatorname{vert}}
\def\aff{\operatorname{aff}}
\def\rank{\operatorname{rank}}
\def\Pic{\operatorname{Pic}}

\def\RR{{\mathbb R}}

\def\ZZ{{\mathbb Z}}
\def\PP{{\mathbb P}}

\def\cF{{\mathcal F}}
\def\cN{{\mathcal N}}
\def\cV{{\mathcal V}}
\def\cW{{\mathcal W}}
\def\cC{{\mathcal C}}
\def\cS{{\mathcal S}}

\def\mm{{\mathfrak m}}

\textwidth=15cm \textheight=22cm \topmargin=0.5cm \oddsidemargin=0.5cm
\evensidemargin=0.5cm \advance\headheight1.15pt

\title{The quest for counterexamples in toric geometry}

\author{Winfried Bruns}
\address{Universität Osnabrück, Institut für Mathematik, 49069 Osnabrück, Germany}
\email{wbruns@uos.de}

\begin{document}

\maketitle

\begin{abstract}
We discuss an experimental approach to open problems in toric
geometry: are smooth projective toric varieties (i)
arithmetically normal and (ii) defined by degree $2$ equations?
We discuss the creation of lattice polytopes defining smooth
toric varieties as well as algorithms checking properties (i)
and (ii) and further potential properties, in Particular a
weaker version of (ii) asking for scheme-theoretic definition
in degree $2$.
\end{abstract}

\section{Introduction}

Two of the most tantalizing questions in toric geometry concern
the arithmetic normality and the degree of the defining
equations of smooth projective toric varieties:
\begin{enumerate}
\item[(N)] (Oda) Is every equivariant embedding of such a
    variety $\cV$ into projective space arithmetically
    normal? Inn other words, is the homogeneous coordinate
    ring normal?
\item[(Q)] (B\o gvad) Is the ideal of functions vanishing
    on $\cV$ generated in degree $2$?
\end{enumerate}
Both questions have affirmative answers in dimension $2$, but
are open in dimension $\ge 3$. They were the major themes of
workshops at the Mathematisches Institut Oberwolfach (2007) and
the American Institute of Mathematics (2009). The Oberwolfach
report \cite{OW} gives a good overview of the subject. See
Ogata \cite{OgaProj}, \cite{OgaFiber} for some positive results
in dimension $3$.

Toric geometry has developed a rather complete dictionary that
translates properties of projective toric varieties into
combinatorics of lattice polytopes, and therefore both
questions can be formulated equivalently in the language of
lattice polytopes and their monoid algebras. In the following,
lattice polytopes representing smooth projective toric
varieties are called \emph{smooth}, those representing a normal
projective toric variety are called \emph{very ample}, and
those representing an arithmetically normal subvariety are
\emph{normal}. A very brief overview of the connection between
toric varieties and lattice polytopes is given in Section
\ref{Toric}.

An algorithmic approach for the search of counterexamples was
discussed by Gubeladze and the author about 10 years ago, and
taken up by Gubeladze and Ho\c{s}ten in 2003, however not fully
implemented. Such an implementation was realized by the author
in 2007, and completed and augmented in several steps. A
software library on which the implementation is based had
previously been developed for the investigation of unimodular
covering and the integral Carathéodory property \cite{BG:cov},
\cite{ICP}. Moreover, Normaliz \cite{Nmz} proved very useful
(and profited from the experience gained in this project).

Unfortunately the search for counterexamples has been fruitless
to this day. Nevertheless we hope that a discussion of the
algorithmic approach to (Q) and (N) and several related
properties of smooth lattice polytopes is welcome.

The main experimental line consists of three computer programs
for the following tasks:
\begin{enumerate}
\item the random creation of smooth projective toric
    varieties via their defining fans;
\item the computation of support polytopes;
\item the verification of various properties, in particular
    (N) and (Q).
\end{enumerate}
The implementation of the first two tasks is described in
Section \ref{Fans}.

Testing normality (Section \ref{Norm}) is much easier and
faster than testing quadratic generation, and amounts to a
Hilbert basis computation that is usually a light snack for the
Normaliz algorithm described in \cite{BK} and \cite{BI}.
Quadratic generation requires more discussion (Section
\ref{Quadr}). One of the results found in connection with the
experimental approach is a combinatorial criterion for
scheme-theoretic definition in degree $2$. In contrast to
ideal-theoretic definition in degree $2$, as asked for in (Q),
it can be tested efficiently for polytopes with a large number
of lattice points.

For an arbitrary lattice polytope $P$ the multiples $cP$ are
normal for $c\ge \dim P-1$ and their toric ideals are generated
in degree $2$ for $c\ge \dim P$ \cite{BGT}. Therefore one
expects counterexamples to have few lattice points, and so we
try to reduce smooth lattice polytopes in size without giving
up smoothness, of course. In Section \ref{Chisel} we explain
the technique of \emph{chiseling}, already suggested by
Gubeladze and Ho\c{s}ten, that splits a smooth polytope in two
parts unless it is \emph{robust}. It is then not hard to see
that a minimal counterexample to (Q) or (N) must be robust.

After a discussion of some further potential properties of
smooth polytopes, in particular the positivity of the
coefficients of their Ehrhart polynomials, we widen the class
of lattice polytopes by including the very ample ones. In fact,
it would already be very interesting to find simple polytopes
that are very ample but not normal.

One can interpret the failure of the search for counterexamples
as an indication that (N) and (Q) hold. However, the main
difficulty is not the investigation of given smooth polytopes:
it is their construction for which we depend on the
construction of fans, objects that live in the space dual to
that of the polytopes. It is doubtful whether we can generate a
sufficient amount of complexity in the dual space without
loosing the passage to primal space. An argument supporting
this viewpoint is given in Section \ref{VeryA}.

The software on which our experiments have been based was made
public in 2009 and has recently been updated \cite{TE}. Its
documentation discusses the practical aspects of its use. They
will be skipped in the following.
\bigskip

\emph{Acknowledgement.}\enspace The author is very grateful to
Joseph Gubeladze and Serkan Ho\c{s}ten for sharing their ideas.
Without Joseph's enthusiasm, his inspiring comments and the
perpetual discussions with him (almost never controversial),
this project would not have been started.

The author is also indebted to Mateusz Michalek for his careful
reading of the paper, in particular for pointing out a mistake
in the author's first version of Theorem \ref{scheme2} and for
suggesting the correction.

\section{Lattice polytopes and toric varieties}\label{Toric}

We assume that the reader is familiar with the basic notions of
discrete convex geometry, combinatorial commutative algebra and
toric algebraic geometry. These notions are developed in the
books by Fulton \cite{Ful}, Oda \cite{Oda} and
Cox--Little--Schenck \cite{CLS}. We will follow the terminology
and notation of \cite{BG}.

Nevertheless, let us briefly sketch the connection between
projective toric varieties, projective fans and lattice
polytopes since the experimental approach to the questions (N)
and (Q) is based on it. The main actors in the experimental
approach are lattice polytopes, and therefore it is natural,
but opposite to the conventions of toric geometry, to have them
live in the primal vector space $V=\RR^d$ whereas (their
normal) fans reside in the dual space $V^*=\Hom_\RR(V,\RR)$. By
$L$ we denote the lattice $\ZZ^d$, and $L^*$ is its dual in
$V^*$.

Let $P\subset V$ be a lattice polytope. In order to avoid
technicalities of secondary importance we will assume that the
lattice points in $P$ generate $L$ as an affine lattice
whenever we are free to do so. In particular $P$ has dimension
$d$, and we need not distinguish \emph{normal} and
\emph{integrally closed} lattice polytopes (in the terminology
of \cite{BG}).

The set $E(P)=P\times\{1\}\cap \ZZ^{d+1}$ generates a submonoid
$M(P)$ of $\ZZ^{d+1}$. The monoid algebra $R=K[P]=K[M(P)]$ over
a field $K$ has a natural grading and is generated by monomials
of degree $1$ (that, by construction, correspond to the lattice
points of $P$). Thus $\cV=\Proj R$ is a projective subvariety
of the projective space $\PP_K^n$, $n=\#E(P)-1$. The natural
affine charts of $\cV$ are the spectra of monoid algebras
obtained by dehomogenizing $R$ with respect to the monomials
that correspond to the vertices of $P$: for such a vertex $v$
the corresponding chart is given by $\Spec K[M(P)_v]$ where the
monoid $M(P)_v\subset \ZZ^{d}$ is generated by the difference
vectors $x-v$, $x\in P\cap L$. (In multiplicative notation,
$x-v$ corresponds to the quotient of two monomials of degree
$1$.)

In classical terminology, a toric variety is required to be
normal, and this is the case if and only if all the algebras
$K[M(P)_v]$ are normal. A polytope $P$ with this property is
called \emph{very ample} since it represents a very ample line
bundle on $\cV$. Under our assumptions on $P$, normality of
$K[M(P)_v]$  is equivalent to the equality $M(P)_v=C(P)_c\cap
L$ where $C(P)_v$ is the \emph{corner} (or \emph{tangent}) cone
of $P$ at $v$: it is generated by the vectors $x-v$, $x\in P$.

The variety $\cV$ is smooth if and only if the algebras
$K[M(P)_v]$ are polynomial rings. In terms of $P$, this
property can be characterized as follows: exactly $d$ edges
emanate from each vertex $v$, and the $d$ vectors $w-v$, where
$w$ is the lattice point next to $v$ on an edge, are a basis of
$L$. Such polytopes are called \emph{smooth}.

The combinatorial approach to the open questions (N) and (Q) is
justified by the fact that the homogeneous coordinate rings
considered in these questions are all of type $K[P]$ where $P$
is a smooth polytope.

The rather elementary passage from a lattice polytope $P$ to a
projective toric variety has been sketched above. In order to
justify the last claim we have to reverse the construction.
Every projective toric variety of dimension $d$ intrinsically
defines a complete projective fan $\cF$ in $V^*$ (we remind the
reader on our convention on primal and dual space), and the
coordinate rings of the equivariant embeddings of $\cV$ into
projective space are given in the form $\Proj K[P]$ where $P$
is a very ample lattice polytope (satisfying all our basic
assumptions) such that $\cN(P)=\cF$. Such a polytope is called
a \emph{support polytope} of $\cF$. The corner cones of $P$ are
exactly the cones dual to the cones in $\cF^{[d]}$ (the set of
$d$-dimensional faces of $\cF$), and since duality preserves
unimodularity, $\cV$ is smooth if and only if $\cF$ is
unimodular.

The correspondence between unimodular projective fans and
smooth projective toric varieties is not only of fundamental
theoretical importance---it offers a way to construct smooth
polytopes from ``random data''.

\section{Polytopes from fans}\label{Fans}

\subsection{Creating unimodular projective fans}
As just explained, the choice of a smooth projective
variety is equivalent to the construction of a projective
unimodular fan. It can be carried out as follows:
\begin{enumerate}
\item[(UF1)] Choose vectors $\rho_1,\dots,\rho_s$ in $L^*$
    such that the origin is in the interior of
    $Q=\conv(\rho_1,\dots,\rho_s)$. The cones spanned by
    the faces of $Q$ (with apex in $0$) then form a
    projective fan $\cF$.

\item[(UF2)] Choose a regular triangulation of the boundary
    of $Q$ with vertices in lattice points, and replace
    $\cF$ by the induced simplicial refinement.

\item[(UF3)] For each maximal cone of $\cF$ compute its
    Hilbert basis and refine $\cF$ by stellar subdivision,
    inserting all these vectors in some random order.

\item[(UF4)] Repeat (UF3) until a unimodular fan is
    reached.
\end{enumerate}
This is nothing but the algorithm producing an equivariant
desingularization of the projective toric variety defined by
the choice of $\cF$ in (UF1). It terminates in finitely many
steps since stellar subdivision by Hilbert basis elements
strictly reduces the multiplicities of the simplicial cones.
Computing unimodular fans is fast, contrary to the computation
of support polytopes.

\subsection{Computing support polytopes}
Once we have a unimodular fan $\cF$, we must find support
polytopes of $\cF$. Let us first assume that $\cF$ is just an
arbitrary complete fan. The algorithm that we describe in the
following will decide whether $\cF$ is projective by providing
lattice polytopes $P$ such that $\cN(P)=\cF$ in the projective
case, and ending with a negative outcome otherwise. The set of
rays of $\cF$ is denoted by $\cF^{[1]}$, and its elements are
listed as $\rho_1,\dots,\rho_s$.

Each support polytope is the solution set of a system of linear
inequalities
$$
\rho_i(x)\ge -b_i,\qquad b_i\in\ZZ,\ i=1,\dots,s.
$$
(The choice of the minus sign will turn out natural.) We are
searching for the right hand sides $b=(b_1,\dots,b_s)\in
\cW=\ZZ^s$, such that the following conditions are satisfied:
\begin{enumerate}
\item[(LP)] For each cone $\Sigma\in\cF^{[d]}$ there exists
    a vector $v_\Sigma\in L$ such that
    $\rho_i(v_\Sigma)=-b_i$ for $\rho_i\in \Sigma$.
\item[(CP)] The points $v_\Sigma$ are indeed the vertices
    of their convex hull $P(b)$.
\item[(VA)] $P(b)$ is very ample.
\end{enumerate}

In fact, for each $\Sigma\in\cF^{[d]}$, the hyperplanes with
equations $\rho_i(x)=-b_i$, $\rho_i\in\Sigma$, must meet in a
lattice point $v_\Sigma\in L$. Thus condition (LP) selects a
sublattice $\cC$ of $\cW$, and there is a well-defined linear
map $\vert_\Sigma: \cC\to L$ that assigns $b\in \cC$ the
prospective vertex $v_\Sigma$.

In the unimodular case, (LP) is satisfied for all
$b\in\cW=\cC$, and this simplifies the situation significantly!
Also (VA) is automatically satisfied. Therefore we concentrate
on condition (CP) which requires that the points $v_\Sigma$ are
in convex position. This is equivalent to the system
\begin{equation}
\rho_j(\vert_\Sigma(b)) \ge -b_j+1,\qquad j=1,\dots,s,\
\rho_j\notin \Sigma,\quad \Sigma \in \cF.\label{veryample}
\end{equation}
of inequalities being satisfied. Convexity is a local
condition, and therefore one can restrict the system to a
smaller set of inequalities: one needs to consider only the
inequalities $\rho_j(\vert_\Sigma(b)) > -b_j$ such that
$\rho_j\notin\Sigma$ is a ray in a cone $T\in\cF$ sharing a
facet with $\Sigma$. (This condition is easily checked
algorithmically.) The set of pairs $(\Sigma,j)$ just defined is
denoted by $\cS$.

We summarize and slightly reformulate our discussion as
follows. Set
$$
N'=\{b\in\RR^s: \rho_j(\vert_\Sigma(b))+ b_j\ge 0,\ (\Sigma,j)\in\cS\}.
$$
Then the lattice polytopes we are trying to find correspond to
the points in $\cC\cap\relint(N')$. The cone $N'$ is not
pointed since it contains a copy of $V$, namely the vectors
$(\rho_j(v))$. However, we loose nothing if we choose a cone
$\Sigma_0\in\cF^{[d]}$ and set $v_{\Sigma_0}=0$. In this way we
intersect $N'$ with a linear subspace $U$, and the intersection
$$
N=N'\cap U
$$
is indeed pointed. Moreover, since $0$ is one vertex of the
polytopes to be found, we have $b_j\ge 0$ for all $j$ and all
points $b\in N'$. In particular this implies
$$
P(b)\subset P(b+b')
$$
for all $b,b'\in N\cap\cC$.

Heuristically, and for the reasons pointed out above, the
candidates for counterexamples should appear among the
\emph{inclusion minimal} polytopes $P$ with $\cF=\cN(P)$. In
view of the inclusion just established, it is enough to
determine the minimal system of generators of $\cC\cap\relint
N$ as an ideal of the monoid $\cC \cap N$. After homogenization
of the system \eqref{veryample} and fixing
$\vert_{\Sigma_0}=0$, this amounts to a Hilbert basis
calculation in the cone $\tilde N\subset \RR^{s-d+1}$ defined
by the inequalities
\begin{align*}
\rho_j(\vert_\Sigma(b))+b_j-h&\ge 0,\qquad (\Sigma,j)\in\cS,\\
h&\ge 0.
\end{align*}
From the Hilbert basis computed we extract the elements with
$h=1$, and obtain a collection of polytopes among which we
easily find the inclusion minimal ones. (If no such element
exists, the fan has proved to be non-projective.)

\begin{remark}
(a) The letters $\cW$ and $\cC$ have been chosen since $\cW$
represents the group of torus invariant Weyl divisors and $\cC$
the group of torus invariant Cartier divisors. By fixing one of
the vertices at the origin, we have chosen a splitting of the
epimorphism $\cC\to \Pic(\cV)$.
\end{remark}

\begin{remark} \label{Pract}
(a) Since we favor small examples and since very large ones
tend to be intractable, we limit the construction of unimodular
fans to at most $d+20$ rays (in other words, $\rank
\Pic(\cV)\le 20$) and $150$ maximal simplicial cones. These
numbers can be varied, of course, but they allow the
computation of support polytopes.

(b) Computing all minimal support polytopes is only possible if
the rank of the Picard group is not too large. In practice,
$10$ is a reasonable bound. One way out is to compute the
extreme rays of $\widetilde N$ and to select those with $h=1$.
Though there is no guarantee for the existence of such extreme
rays, they almost always exist.

(c) Not every element of the Hilbert basis computed defines an
inclusion minimal support polytope of the given fan. The reason
is that the difference of the corresponding support functions
of the fan need not be convex.
\end{remark}

\section{Normality}\label{Norm}

\subsection{Checking normality} For this purpose we use the
Hilbert basis algorithm of Normaliz (in the author's
experimental library). We refer to \cite{BK} and \cite{BI} for
the details. The algorithm is fast for the polytopes that have
been investigated.

\subsection{Extending corner covers}\label{CornExt}
One reason for which one could expect smooth polytopes $P$ to
be normal is that each corner is covered by unimodular
simplices---in fact, it is such a simplex---and that it should
be possible to extend these covers far enough into $P$ such
that $P$ is covered by the extensions. In order to define the
extensions one can identify the corner cone $C(P)_v$ with the
cone $D$ generated by the sums of unit vectors
$e_1,e_1+e_2,\dots,e_1+\dots+e_d$ in $\RR^d$ and transfer the
Knudsen-Mumford triangulation of $D$ to a triangulation
$\Sigma$ of $C(P)_v$ (compare \cite[Ch.~3]{BG}. Then we let
$U_v(P)$ be the union of those unimodular simplices of $\Sigma$
that lie in $P$. If $P=\bigcup_v U_v(P)$, then $P$ is evidently
normal. It would be possible to test whether $P=\bigcup_v
U_v(P)$. However, a direct test for normality is much faster.
We will come back to this point in \ref{superconn} below.
$$
\psset{xunit=1.0cm, yunit=2.0cm}
\begin{pspicture}(0,-0.5)(9,1.5)
 \pspolygon[fillstyle=solid, fillcolor=lightgray](0,0)(9,0)(6,1)
 \psline(0,0)(9,0)
 \psline(0,0)(9,1.5)(9,0)
 \psline(3,0.5)(7.5,0.5)
 \psline(4.5,0)(7.5,0.5)
 \psline(3,0.5)(4.5,0)
 \psline(6,1)(9,0)
 \rput(-0.2,0.0){$v$}
 \rput(7.8,0.8){$P$}
 \rput(4.5,0.25){$U_v(P)$}
\end{pspicture}
$$

\section{Generation in degree 2}\label{Quadr}

In the following we must work with the toric ideal of a
polytope and its dehomogenizations, and some precise notation
is needed. The monoid algebra $K[M(P)]$ has a natural
presentation as a residue class ring of a polynomial ring
$$
A_P=K[X_x:x\in P\cap\ZZ^d]\xrightarrow{\phi} K[M(P)],\qquad
X_x\mapsto (x,1)\in M(P)\subset K[M(P)].
$$
The kernel of $\phi$ is the toric ideal of $P$. It is generated
by all binomials
$$
\prod_x X_x^{a_x}-\prod_x X_x^{b_x}\qquad\text{such that}\qquad
\sum_xa_x x=\sum_x b_x x.
$$
The lattice points $w$ next to a vertex $v$ of $P$ on an edge
of $P$ will be called \emph{neighbors} of $v$.

\subsection{Testing generation in degree $2$}\label{Deg2Deg3}
There seems to be no other way for testing generation in degree
$2$ than computing the toric ideal via a Gröbner basis method.
(For toric ideals, special algorithms have been devised; see
\cite{HS} or \cite{BR}.) In order to access also rather large
polytopes we only test whether the toric ideal $I(P)$ needs
generators in degree $3$ as follows, letting $J$ denote the
ideal generated by the degree $2$ binomials in $I(P)$:

\begin{enumerate}
\item[(GB2)] We compute the degree $2$ component $G$ of a
    Gröbner basis of $J$ with respect to a reverse
    lexicographic term order by simply scanning all degree
    $2$ binomials in $I(P)$.

\item[(GB3)] Next we compute the degree $3$ component of
    the Gröbner basis of $J$ by the Buchberger algorithm in
    a specialized data structure.

\item[(HV3)] Then we compute the $h$-vector of $J$ up to
    degree $3$, using the initial ideal from the preceding
    steps.

\item[(NMZ)] Finally the result is compared to the
    $h$-vector of $K[P]$ computed by Normaliz. (This
    $h$-vector gives the numerator polynomial of the
    Ehrhart series of $P$.)
\end{enumerate}
Clearly, $I(P)$ has no generators in degree $3$ if and only if
the two $h$-vectors coincide up to hat degree. The time
consuming step is (GR3) while the others are very fast.

As we will see next, there is a generalization of generation in
degree $2$ that is very natural from the viewpoint of
projective geometry and much faster to decide.

\subsection{Scheme-theoretic generation in degree $2$} Let
$I\subset A=K[X_1,\dots,X_n]$ be a homogeneous ideal (with
respect to the standard grading). We say that $I$ is
\emph{scheme-theoretically generated} in degree $2$ if there
exists an ideal $J$ that is generated by homogeneous elements
of degree $2$ such that $I$ and $J$ have the same saturation
with respect to the maximal ideal $\mm=(X_1,\dots,X_n)$:
$$
I^{\textrm{sat}}=\{x:\mm^kx\in I\text{ for some $k$}\}=J^{\textrm{sat}}
$$
Equivalently, we can require that $I$ and $J$ define the same
ideal sheaf on $\Proj A$. Clearly $J\subset I$ if
$I=I^{\textrm{sat}}$, and this is the case for prime ideals $I$
like toric ideals, whence we may assume that $J=I_{(2)}$ is the
ideal generated by the degree $2$ elements of $I$.

Let $v$ be a vertex of $P$, $R=K[M(P)]$ and $S=R/(X_v-1)$ be
the dehomogenization of $R$ with respect to $X_v$. The
presentation $K[M(P)]=A_P/I(P)$ induces a presentation
$$
B_v=A_P/(X_v-1)\to S\quad \text{with kernel}\quad I(P)B_v.
$$
The residue classes of the $X_x$ are denoted by $Y_x$, and
$B_v$ is again a polynomial ring in the $Y_x$, $x\neq v$. But
$S$ is already the image of the subalgebra $B_v'=K[Y_x:x-v\in
H_v]$, $H_v=\Hilb(M(P)_v)$. Clearly $I_v$ is generated by the
extension of the toric ideal $I(H_v)=I(P)B_v\cap B_v'$ and any
choice of polynomials $Y_z-\mu_z$ where $z-v\notin H_v$ and
$\mu_z\in B_v'$ is a monomial representing $z-v$ as a
$\ZZ_+$-linear combination of the Hilbert basis $H_v$. The
smooth case is characterized by the condition  $I(H_v)=0$. This
simplifies the situation considerably. For the proof of the
next theorem one should note that $I(P)B_v=JB_v$ for a
homogeneous ideal $J$ in $A_P$ if and only if
$I(P)[X_v^{-1}]=J[X_v^{-1}]$.

In order to prove that $I(P)$ is the saturation of $I(P)_{(2)}$
have also to check the dehomogenization with respect to
indeterminates $X_x$ for non-vertices $x$ of $P$. However, as
we will see, the comparison can be reduced to the consideration
of vertices.\pagebreak[3]

\begin{theorem}\label{scheme2}
Let $P$ be a smooth lattice polytope. Then the following are
equivalent:
\begin{enumerate}
\item $I(P)$ is scheme-theoretically generated in degree
    $2$;

\item \begin{enumerate}

\item for all vertices $v$ of $P$ and all lattice points
    $x$ that are not neighbors of $v$ the polytope $P\cap
    (x+v-P)$ contains a lattice point $y\neq v,x$, and
\item every non-vertex lattice point $x$ of $P$ is the
    midpoint of a line segment $[y,z]$, $y,z\in P\cap \ZZ^d$,
    $y,z\neq x$.
\end{enumerate}
\end{enumerate}
\end{theorem}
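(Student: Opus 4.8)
The plan is to translate scheme\nobreakdash-theoretic generation in degree $2$ into a statement about localizations and then to analyze the affine charts of $\Proj A_P$ one coordinate $X_x$ at a time, separating vertices from non-vertices. Writing $J=I(P)_{(2)}$ and $N=I(P)/J$, and using that $I(P)$ is prime, hence saturated, the condition is $J^{\mathrm{sat}}=I(P)$, which is equivalent to $N$ being annihilated by a power of $\mm=(X_x)$. Since $N$ is a finitely generated graded module and the charts $D_+(X_x)$ cover $\Proj A_P$, this is in turn equivalent to $I(P)[X_x^{-1}]=J[X_x^{-1}]$ for every lattice point $x\in P$. Thus the whole theorem reduces to controlling these localizations, and by the remark preceding the statement each of them can be read off from the dehomogenization $B_x=A_P/(X_x-1)$.

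I would first treat the vertex charts and show that they are governed exactly by (2a). In the smooth case $B_v$ is a polynomial ring and $I(P)B_v$ is generated by the binomials $Y_z-\mu_z$, where $z$ runs over the non-neighbours of $v$ and $\mu_z$ is the unique monomial in the neighbour variables representing $z-v$ inside the unimodular corner cone. The key device is the linear functional $\deg_v$ furnished by that cone: since $a+b=c+d$ forces $\deg_v(a)+\deg_v(b)=\deg_v(c)+\deg_v(d)$, the ideal $JB_v$ generated by the dehomogenized degree\nobreakdash-$2$ binomials is $\deg_v$\nobreakdash-homogeneous. For $(2a)\Rightarrow I(P)B_v=JB_v$ I would induct on $\deg_v(z)$: condition (2a) supplies a lattice point $y\neq v,z$ in $P\cap(z+v-P)$, hence $y'=z+v-y\in P$ and a degree\nobreakdash-$2$ binomial dehomogenizing to $Y_z-Y_yY_{y'}$ with $\deg_v(y),\deg_v(y')<\deg_v(z)$, so the inductive hypothesis rewrites $Y_y,Y_{y'}$ as neighbour monomials whose product is forced to be $\mu_z$. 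For the converse I would argue that if (2a) fails at a non-neighbour $x$, then no dehomogenized degree\nobreakdash-$2$ binomial has a side equal to $Y_x$ or to $1$ (the latter because a vertex is extreme, so $X_v^2$ cannot split), whence the monomial $Y_x$ is isolated under the binomial moves of $JB_v$ and cannot be congruent to $\mu_x$, contradicting $I(P)B_v=JB_v$.

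Next I would reduce the non-vertex charts to the vertex charts, which is where (2b) enters. Assuming (2a) and (2b), fix a generic linear functional $\ell$ that is injective on $P\cap\ZZ^d$. For a non-vertex $x$, condition (2b) gives $y+z=2x$ with $y,z\neq x$, so the degree\nobreakdash-$2$ binomial $X_x^2-X_yX_z$ makes multiplication by $X_x^2$ agree with multiplication by $X_yX_z$ on $N$; since $\ell(y)+\ell(z)=2\ell(x)$ and $y\neq z$, one of them, say $y$, satisfies $\ell(y)>\ell(x)$. A downward induction on $\ell$, whose base case is the $\ell$\nobreakdash-maximal lattice point (a vertex, handled by (2a)), then yields a uniform $m$ with $X_x^mN=0$ for every lattice point $x$, so $N$ is $\mm$\nobreakdash-torsion and (1) holds. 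For the necessity of (2b) I would localize at a non-vertex $x\in\relint F$ that is not a midpoint: then no dehomogenized degree\nobreakdash-$2$ binomial in $B_x$ has a constant term, so the monomial $1$ is isolated under the binomial moves of $JB_x$; on the other hand, writing $x$ as a convex combination of the vertices $v_i$ of $F$ and clearing denominators produces a relation $\prod_i Y_{v_i}^{c_i}-1\in I(P)B_x$ coming from $\sum_i c_i(v_i-x)=0$, which therefore cannot lie in $JB_x$. Hence $I(P)[X_x^{-1}]\neq J[X_x^{-1}]$ and (1) fails.

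The hard part is precisely this passage between non-vertex and vertex charts and the symmetric role played by (2b): it must be strong enough to transport nilpotency of the $X_v$\nobreakdash-action down the $\ell$\nobreakdash-filtration, yet its failure must be detectable as an isolated constant term $1$ in the localized binomial ideal. Getting this condition exactly right---neither too weak for sufficiency nor too strong for necessity---is the subtle point (and, as the acknowledgement suggests, the one that needed correction); once it is in place, the remaining work is bookkeeping with the $\deg_v$\nobreakdash-grading and with the elementary combinatorics of binomial moves.
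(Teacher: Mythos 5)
Your proposal is correct, and it diverges from the paper in one substantive place. The vertex-chart analysis is the paper's own: the same $\deg_v$-induction yields $I(P)B_v=I(P)_{(2)}B_v$ from (2)(a), and your contrapositive ``isolated monomial'' arguments for (1)$\Rightarrow$(2)(a) and (1)$\Rightarrow$(2)(b) are the paper's divisibility analysis recast in monoid-congruence language: the paper pushes the homogenization of $Y_x-\mu_x$ (resp.\ of $1-X_w\mu_w$) into $I(P)_{(2)}$ by a power of the inverted variable and observes that $X_v^kX_x$ (resp.\ $X_x^k$) must be divisible by a monomial occurring in a degree-$2$ binomial of $I(P)$, the candidate $X_v^2$ being excluded by vertex extremality---exactly your ``no side divides $Y_x$, no side equals $1$'' formulation, which is justified by the standard description of pure-difference binomial ideals via binomial moves. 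The genuine difference is the non-vertex half of (2)$\Rightarrow$(1). The paper fixes a non-vertex $p$, forms the convex hull $Q$ of all lattice points $w$ with $X_w$ a unit modulo $I(P)_{(2)}[X_p^{-1}]$, and uses (2)(b) to show that every vertex $x$ of $Q$ is a vertex of $P$ (otherwise $X_x^2-X_yX_z\in I(P)_{(2)}$ would make both $X_y,X_z$ units, putting $y,z\in Q$ and contradicting extremality of $x$ in $Q$), after which one inverts that vertex variable and quotes the vertex case. You instead prove directly that $N=I(P)/I(P)_{(2)}$ is $\mm$-torsion by downward induction along a generic linear functional $\ell$ injective on $P\cap\ZZ^d$: (2)(b) gives $X_x^2\equiv X_yX_z$ on $N$ with $\ell(y)>\ell(x)$, so annihilation by powers of variables propagates down from the vertices, which are handled by (2)(a); since $I(P)$ is saturated, $\mm$-torsion of $N$ gives (1). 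Both arguments are correct and both use (2)(b) to ``move toward a vertex''; yours avoids the auxiliary polytope $Q$ and the two-variable localization, and even yields explicit exponents, while the paper's convexity argument stays within a single chart. Likewise your witness $\prod_i Y_{v_i}^{c_i}-1\in I(P)B_x$, obtained from rational barycentric coordinates of the non-vertex $x$ in its smallest face $F$, plays the role of the paper's binomial $X_x^k-X_w\mu_w$ built from the units attached to the lattice points of $F$; both are legitimate.
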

$$
\psset{unit=0.9cm}
\def\vertex{\pscircle[fillstyle=solid,fillcolor=black]{0.09}}
\def\overtex{\pscircle{0.09}}
\begin{pspicture}(0,-1)(6,4.5)
 \pspolygon[linecolor=white,fillstyle=solid,fillcolor=lightgray](0,2)(3.5,2.875)(6,1)(2.5,0.125)
 \rput(0,2){\vertex}
 \rput(2,0){\vertex}
 \rput(2,4){\vertex}
 \rput(6,1){\vertex}
 \pspolygon(0,2)(2,0)(6,1)(2,4)
 \pspolygon[](0,2)(4,-1)(6,1)(4,3)
 \rput(-0.3,2){$v$}
 \rput(6.3,1){$x$}
 \rput(3.5,0.8){\vertex}
 \rput(3.2,0.8){$y$}
  \rput(3,1.5){\overtex}
 \rput(2.5,2.2){\vertex}
 \rput(2.8,2.2){$z$}
 \rput(2,3.3){$P$}
 \rput(4.0,-0.0){$x+v-P$}
\end{pspicture}
$$

\begin{proof}
For the implication (2) $\implies$ (1) we note that for every
vertex and each non-neighbor $x$ we have a binomial
$$
X_vX_x-X_yX_z\in I(P)
$$
by (2)(a). Thus $Y_x-Y_yY_z\in I(P)_{(2)}B_v$. As a positive monoid,
$M(P)_v$ has a grading, and we can use induction on degree to find
monomials $\mu_x$ in the $Y_w$, $w-v\in H_v$, such that
$Y_x-\mu_x\in I(P)_{(2)}B_v$. In fact, $\deg (y-v),\deg (z-v)<\deg
(x-v)$, so that $Y_y-\mu_y$, $Y_z-\mu_z\in I(P)_{(2)}B_v$. Then
$Y_x-\mu_y\mu_z\in I(P)_{(2)}B_v$ as well, and we can set
$\mu_x=\mu_y\mu_z$. This argument shows that $I(P)B_v=I(P)_{(2)}B_v$
or, equivalently, $I(P)[X_v^{-1}]=I(P)_{(2)}[X_v^{-1}]$ for all
vertices $v$ of $P$.

It remains to compare $I(P)$ and $I(P)_{(2)}$ after the inversion of
$X_p$ for non-vertices $p$. Let $Q$ be the convex hull of all
lattice points $w$ such that $X_w$ is a unit modulo
$I(P)_{(2)}[X_p^{-1}]$. One has $Q\neq\emptyset$ since $p\in Q$. Let
$x$ be a vertex of $Q$. If $x$ is a vertex of $P$, we can invert
$X_x$ first and use what has been shown above as a consequence of
(2)(a). Suppose that $x$ is a non-vertex of $P$. Then (2)(b) implies
that $X_x^2-X_yX_z\in I(P)$ for suitable lattice points $y,z$. But
only one of $y,z$ can belong to $Q$, and both are units together
with $X_x$ after the inversion of $X_p$. This is a contradiction,
and thus $x$ is a vertex of $P$.

For (1) $\implies$ (2)(a) we consider the binomial $Y_x-\mu_x\in
I(P)B_v$. Homogenizing and clearing denominators with respect to
$X_v$ yields a binomial
$$
\beta=X_v^kX_x-X_v^p\prod X_w^{a_w}\in I(P).
$$
Multiplying by a high power of $X_v$ sends it into $(P)_{(2)}$.
So we may assume that $\beta$ belongs to $(P)_{(2)}$. But then
$X_v^kX_x$ must be divisible by a monomial appearing in a
degree $2$ binomial in $I(P)$. The only potential degree $2$
divisors are $X_v^2$ and $X_vX_x$. Since $v$ is a vertex, no
power $X_v^m$, $m>1$, can appear in a binomial $\gamma$ in
$I(P)$ as one of the summands (unless $\gamma$ is divisible by
$X_v$). This implies that there exists a degree $2$ binomial
$X_vX_x-X_yX_z$ in $I(P)$, and $y$ and $z$ both belong to
$P\cap(x+v-P)$ since they both belong to $P$.

The argument for (b) is similar. In fact, let $F$ be the smallest
face of $P$ containing $x$. Since $x$ is a non-vertex, $F$ must
contain at least one more lattice point $w$. Modulo $I(P)[X_x^{-1}]$
all $X_w$ for lattice points $w$ in $F$ are units. Since
$I(P)[X_x^{-1}]=I(P)_{(2)}[X_x^{-1}]$ by hypothesis, the same holds
modulo $I(P)_{(2)}[X_x^{-1}]$. By similar arguments as above this
implies the existence of a binomial
$$
X_x^k-X_w\mu_w
$$
in $I(P)_{(2)}$. But then we must have a nonzero binomial
$X_x^2-X_yX_z\in I(P)$.
\end{proof}

\begin{remark}\label{deg2gen}
The implication (2) $\implies$ (1) can be generalized as follows:
smoothness is replaced by the hypothesis that  for each vertex $v$
the ideal $I(H_v)$ is generated by homogeneous binomials of degree
$2$. In fact, \emph{homogeneous} binomials in $I(H_v)$ lift to
homogeneous binomials in $I(P)$, and we need only add the binomials
$X_vX_x-X_yX_z$ and $X^2-X_yX_z$ existing by (2) in order to find a
degree $2$ subideal $J$ of $I(P)$ such that $JB_x=I(P)B_x$ for all
lattice points $x\in P$.

The implication (1) $\implies$ (2) holds for arbitrary lattice
polytopes if one restricts (2)(a) to those $x$ for which
$x-v\notin H_v$. However, one cannot conclude that the ideals
$I(H_v)$ are generated in degree $2$. We will come back to this
point in Remark \ref{matroid}.
\end{remark}

By condition (2) of the theorem, scheme-theoretic generation in
degree $2$ can be tested very quickly. In most cases $y$ can be
chosen as a neighbor of $v$ for (2)(a). Thus the number of
lattice points $y\in P$ to be tested is very small on average.
However, the theorem very strongly indicates that finding a
counterexample to scheme-theoretic generation in degree $2$ is
extremely difficult.

\begin{remark}
In the author's first formulation of Theorem \ref{scheme2}
condition (2)(b) was missing. This mistake was pointed out by
Mateusz Michalek who also suggested the inclusion of (2)(b).
The problem is discussed in \cite[Remark 4.2]{Mich}.
\end{remark}

\subsection{Abundant degree $2$ relations}
All smooth polytopes that have come up in the search for
counterexamples satisfy an even stronger condition: let us say
that $P$ has \emph{abundant degree $2$ relations} if condition
(2)(a) continues to hold if we replace the vertex $v$ by an
arbitrary lattice point: for all lattice points $v,x\in P$
(including the case $v=x$) the midpoint of the segment $[v,x]$
is also the midpoint of a different line segment $[y,z]\subset
P$, apart from the following obvious exceptions: one of $v,x$
is a vertex, say $v$, and $x=v$ or $x$ is a neighbor of $v$. In
terms of the toric ideal $I(P)$: it contains a binomial
$X_vX_x-X_yX_z\neq 0$ for all lattice points $v,x$ unless this
is priori impossible. (This includes condition (2)(b).)

The property of having abundant degree $2$ relations clearly
implies scheme-theoretic generation in degree $2$ for smooth
polytopes, but it is not clear how it is related to generation
in degree $2$. For arbitrary lattice polytopes it does not
follow from generation in degree $2$. As an example one can
take the join of two line segments with midpoints whose toric
ideal is generated by $X_xX_z-X_y^2$, $X_uX_w-X_v^2$: the
midpoint of $[y,v]$, both non-vertices, is not the midpoint of
any other line segment since $X_yX_v$ does not appear in the
generating binomials.

One is tempted to prove that abundant degree $2$ relations
imply generation in degree $2$ by  a Gröbner basis argument.
While we cannot exclude that such an argument is possible, it
is very clear that its success depends on the choice of the
term order. A simple example is the following polytope with the
term order that induces the unimodular triangulation: the
corresponding Gröbner basis contains $X_xX_yX_z-X_w^3$.
$$
\begin{pspicture}(0,0)(2,2.5)
 \multirput(0,0)(1,0){3}{\vertex}
 \multirput(0,1)(1,0){3}{\vertex}
 \multirput(0,2)(1,0){3}{\vertex}
 \pspolygon(0,0)(2,0)(2,2)(0,2)
 \psline(0,0)(1,2)
 \psline(0,0)(1,1)
 \psline(0,0)(2,1)
 \psline(0,1)(1,2)
 \psline(1,0)(2,1)
 \psline(1,1)(1,2)
 \psline(1,1)(2,1)
 \psline(1,2)(2,1)
 \rput(-0.2,-0.2){$x$}
 \rput(1.0,2.3){$y$}
 \rput(2.2,1.0){$z$}
 \rput(1.2,1.2){$w$}
\end{pspicture}
$$

\begin{remark}\label{matroid}
An important class of polytopes that have abundant degree $2$
relations, but are not known to be quadratically defined, are
given by matroids. See \cite{Schw} for a rather recent result
and references for this very hard problem.

In fact, it is not difficult to see that the symmetric exchange
in matroids supplies abundant degree $2$ relations. So one
could try to apply the generalization \ref{deg2gen} of Theorem
\ref{scheme2}, (2) $\implies$ (1), in order to show
scheme-theoretic generation in degree $2$ for matroid
polytopes. But this does not work since the ideal $I(H_v)$ need
not be generated in degree $2$, even if $I(P)$ is.

An example for this phenomenon is given by the matroid whose
bases are the triples $[i_1,i_2,i_3]$ indicating the vector
space bases contained in the family $u_i=e_i$, and
$u_{i+3}=e_j+e_k$, $i=1,2,3$, $1\le j<k\le 3$, $i\neq j,k$. For
the vertex $v=[1,2,3]$ the ideal $I(H_v)$ is generated by a
degree $3$ binomial.
\end{remark}

\subsection{Squarefree divisor complexes} Whether the toric
ideal $I(P)$ needs generators in a certain multidegree can be
tested by checking the connectivity of the squarefree divisor
complex of the given degree. For example, see \cite{BH} for the
terminology and the details. (According to Stanley
\cite[7.9]{St}, the result goes back to Hochster.) Such a test
has been implemented and is applied to a random selection of
multidegrees of total degree $3$ for polytopes that are too
large for the approach described in \ref{Deg2Deg3}. However, it
seems rather hopeless to find a counterexample to quadratic
generation by this test since it can only deal with a single
multidegree at a time.

\section{Chiseling}\label{Chisel}

As pointed out above, one should expect counterexamples to be
small, but ``complicated''. In particular, one can try to pass
from a smooth polytope $P$ to a smooth subpolytope by splitting
$P$ along a suitable hyperplane. In its simplest form this
amounts to cutting corners off $P$ as illustrated in the figure
below. The operation described in the following is called
\emph{chiseling}---it sounds less cruel than ``cutting faces''.
$$
\begin{pspicture}(-1,0)(4,3)
 \multirput(0,0)(1,0){5}{\vertex}
 \multirput(0,1)(1,0){5}{\vertex}
 \multirput(0,2)(1,0){5}{\vertex}
 \pspolygon(0,0)(4,0)(4,2)(0,2)
 \psline[linestyle=dashed](-0.5,0.5)(1.5,2.5)
 \rput(2.5,0.5){$P_1$}
 \rput(0.3,1.7){$P_2$}
 \rput(-1,0.5){$H$}
\end{pspicture}
$$

Suppose $F$ is a face of the smooth polytope $P$ and $\aff(P)$
the intersection of the support hyperplanes $H_1,\dots,H_s$
where $s=d-\dim F$. Let $H_i$ be given by the equation
$\lambda_i(x)=b_i$ (and $\lambda_i(x)\ge b_i$ on $P$). Set
$\sigma=\lambda_1+\dots+\lambda_s$. Then $\sigma$ has constant
value $b=b_1+\dots+b_s$ on $F$. Let $c$ be the minimum value of
$\sigma$ on the vertices $x$ of $P$ that do not lie in $F$, and
suppose that $b<c-1$ (clearly $b<c$). In this case the
hyperplane $H$ with equation $\sigma(x)=c-1$ splits $P$ into
the polytopes
\begin{equation}
\begin{aligned}
P_1&=\{x:x\in P\text{ and }\sigma(x)\ge c-1\},\\
P_2&=\{x:x\in P\text{ and }\sigma(x)\le c-1\}.
\end{aligned}\label{split}
\end{equation}
\begin{lemma}\label{Hsmooth}
$P_1$ and $P_2$ are smooth lattice polytopes of full dimension.
\end{lemma}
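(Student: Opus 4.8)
The statement asserts two things about each piece $P_1,P_2$: that it is a lattice polytope of full dimension $d$, and that it is smooth. The full-dimensionality is the easy part, and I would dispose of it first. The cutting hyperplane $H$ has equation $\sigma(x)=c-1$, and since $b<c-1$ strictly, the face $F$ (on which $\sigma\equiv b$) lies strictly on the $P_2$ side, while by definition of $c$ every vertex of $P$ not in $F$ satisfies $\sigma(x)\ge c>c-1$, so lies strictly on the $P_1$ side. Hence both open halfspaces contain vertices of $P$, and each $P_i$ inherits a full-dimensional interior from $P$; neither piece is degenerate.

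\emph{Smoothness and integrality.} The real content is that the \emph{new} vertices created on $H$ are lattice points and that the edge structure at every vertex of $P_i$ is still unimodular. The plan is to analyze vertex by vertex. Vertices of $P_i$ come in two kinds: old vertices of $P$ that survive in the halfspace, and new vertices lying on $H$. For an old vertex $v$ of $P$ lying strictly inside the retained halfspace, a neighborhood of $v$ in $P_i$ agrees with a neighborhood in $P$, so the $d$ edge directions and their unimodularity are unchanged. The work is therefore concentrated at the new vertices on $H$. Here I would use smoothness of $P$ crucially: at a vertex $v$ of $P$ the neighbors $w$ give lattice vectors $w-v$ forming a $\ZZ$-basis of $L$, so in the affine chart centered at $v$ the corner cone $C(P)_v$ is the standard orthant and $P$ looks locally like $\RR_{\ge 0}^d$. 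The new vertices of $P_i$ arise where $H$ meets edges of $P$; I would show that on each such edge $\sigma$ is an integral affine function whose value drops by a controlled integer amount per lattice step, so that the intersection with $\{\sigma=c-1\}$ falls on a lattice point.

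\emph{The key computation.} The heart is the following local claim at a vertex $v$ of $P$ that gets cut. Choose the unimodular basis $w_1-v,\dots,w_d-v$ of $L$ given by the neighbors, so that lattice points near $v$ are $v+\sum a_i(w_i-v)$ with $a_i\in\ZZ_{\ge 0}$. Since $\sigma=\lambda_1+\dots+\lambda_s$ is a $\ZZ$-linear functional (each $\lambda_i\in L^\ast$), its values on this basis are integers $m_i=\sigma(w_i)-\sigma(v)$. If $v$ lies in $F$ then $\sigma(v)=b$; I would argue that exactly the $s=d-\dim F$ edges leaving $v$ into the interior transversal to $F$ have $m_i\ge 1$ while the edges along $F$ have $m_i=0$, and that in fact each transversal $m_i=1$ because $\lambda_i$ is a facet-support functional taking value $b_i$ on the facet and value $b_i+1$ at the adjacent lattice layer (this is exactly where smoothness of $P$, forcing the primitive inner normals to be part of a dual basis, is needed). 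Consequently $\sigma$ increases in unit steps along the transversal directions, the slab $\{b\le\sigma\le c-1\}$ is cut cleanly at integer heights, and the new vertices are lattice points; moreover the truncated corner cone at such a new vertex is again generated by a unimodular system, so $P_1,P_2$ are smooth.

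\emph{Main obstacle.} The routine part is bookkeeping; the genuinely delicate step is the last one: verifying that after truncation the local cone at a \emph{newly created} vertex is still unimodular, i.e.\ that the $d$ edge vectors emanating from a new vertex of $P_i$ form a lattice basis. I expect this to require writing the cut hyperplane in the local unimodular coordinates at the nearby old vertex and checking that replacing one generator $w_i-v$ by the primitive vector along the new edge (the one pointing back along $H$) preserves the determinant $\pm1$. The subtlety is that a single new vertex may be the image of cutting several edges simultaneously when $\dim F<d-1$, so one must confirm that the combined truncation is modeled locally on chopping the standard orthant $\RR_{\ge0}^d$ by a hyperplane of the form $\sum_{i\in T}x_i=1$ for the transversal index set $T$; the resulting new vertices are then the points $e_i$ ($i\in T$), whose incident edge vectors are manifestly a permuted unimodular basis. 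Establishing that normalization cleanly — that smoothness of $P$ really does force $m_i=1$ on transversal edges, so the model is exactly $\sum_{i\in T}x_i=1$ — is where I would spend the most care.
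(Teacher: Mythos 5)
Your proof is correct, but it takes a genuinely different route: the paper disposes of the lemma in one sentence in the \emph{dual} space, observing that $\cN(P_1)$ is the stellar subdivision of $\cN(P)$ along the ray spanned by $\sigma=\lambda_1+\dots+\lambda_s$; since in every maximal cone of $\cN(P)$ containing $\lambda_1,\dots,\lambda_s$ these vectors are part of a unimodular generating system, replacing one $\lambda_j$ by their sum preserves unimodularity, and the cones of $\cN(P_2)$ are handled in the same way. Your vertex-by-vertex primal analysis is the shadow of that computation: the fact you correctly identify as the crux --- smoothness forces the primitive facet normals at a vertex $v\in F$ to be the dual basis of the primitive edge directions, so $\sigma$ increases in unit steps along each transversal edge --- is exactly what makes the stellar subdivision unimodular. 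What your longer route buys is a point the paper's one-liner glosses over: that $P_1$ and $P_2$ are \emph{lattice} polytopes. The fan argument only controls the (smooth) local cone structure; integrality of the new vertices requires precisely your observation that along an edge crossing $H$ the lattice points realize the consecutive integer values $b, b+1,\dots$ of $\sigma$, so the level $c-1$ is attained at a lattice point, and is attained in the edge's relative interior because the far endpoint has $\sigma\ge c$. Two small repairs to your write-up, neither fatal: the local model is the orthant cut at integral level $c-1-b\ge 1$, not necessarily at level $1$ (immaterial for the determinant); and the correct local picture of $P$ at a new vertex $p$ on an edge $[v,w]$ is the wedge cut out by the $d-1$ facets of $P$ containing that edge (the only facets through $p$, since $p$ lies in the edge's relative interior), not the full corner cone at $v$ --- with that adjustment your list of edge directions $-e_j$, $e_k-e_j$ for transversal $k\neq j$, and $e_k$ for the remaining $k$, and the resulting determinant $\pm 1$, is exactly right, and the same computation with $+e_j$ handles the $P_1$ side.
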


This is easily seen by considering the normal fans of $P_1$ and
$P_2$. The normal fan of $P_1$ is a stellar subdivision of
$\cN(P)$. The next theorem shows that it suffices to
investigate $P_1$ and $P_2$ in the search for counterexamples.
As in \cite{BG} we say that a polytope is \emph{integrally
closed} if $P\cap \ZZ^d$ generates $\ZZ^d$ affinely and the
monoid ring $K[P]$ is normal, or, equivalently, $K[P]$ is
integrally closed in $K[\ZZ^{d+1}]$.

\begin{theorem}\label{robust}
Let $P$ be a lattice polytope and $H$ a rational hyperplane
such that $P_1=P\cap H^+$ and $P_2=P\cap H^-$  are lattice
polytopes.
\begin{enumerate}
\item If $P_1$ and $P_2$ are integrally closed, then $P$ is
    integrally closed.
\item If $P_1$ and $P_2$ are integrally closed and the
    toric ideals of $P_1$ and $P_2$ are generated in
    degrees $d_1$ and $d_2$ respectively, then the toric
    ideal of $P$ is generated in degrees
    $\le\max(2,d_1,d_2)$.
\end{enumerate}
\end{theorem}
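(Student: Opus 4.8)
The plan is to handle both parts through the cone over $P$ and its splitting along $H$. Since $H$ is rational I would first normalize the data: choose a primitive integral functional $\lambda$ and $h\in\ZZ$ with $H=\{\lambda=h\}$, $H^+=\{\lambda\ge h\}$, $H^-=\{\lambda\le h\}$, so that $\lambda(x)\in\ZZ$ for every $x\in\ZZ^d$. Writing $C=\RR_{\ge0}\,(P\times\{1\})\subset\RR^{d+1}$ for the cone over $P$ (not the corner cones), and $C_1,C_2$ for the cones over $P_1,P_2$, the lifted functional $(x,c)\mapsto\lambda(x)-ch$ cuts $C$ into $C_1=\{\lambda\ge ch\}$ and $C_2=\{\lambda\le ch\}$, because $cP_1=cP\cap\{\lambda\ge ch\}$ and $cP_2=cP\cap\{\lambda\le ch\}$. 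The single geometric fact I will use repeatedly is therefore $C=C_1\cup C_2$: every lattice point of $C$ lies in $C_1$ or in $C_2$. Part (1) is then immediate: affine generation of $\ZZ^d$ by $P\cap\ZZ^d$ passes up from $P_1\cap\ZZ^d\subseteq P\cap\ZZ^d$, and for normality a lattice point $(z,c)\in C\cap\ZZ^{d+1}$ lies in, say, $C_1$, hence in $M(P_1)=C_1\cap\ZZ^{d+1}$ by integral closedness of $P_1$, and $M(P_1)\subseteq M(P)$ since lattice points of $P_1$ are lattice points of $P$. Thus $C\cap\ZZ^{d+1}=M(P)$.

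For part (2) I would pass to the standard reformulation of the degree of generation as connectivity of fibers: by the correspondence between binomial generators and exchanges of factorizations, $I(P)$ is generated in degrees $\le D$ if and only if, for every $(z,n)\in M(P)$, any two factorizations of $(z,n)$ into level-$1$ lattice points are linked by a chain of binomial exchanges each of total degree $\le D$. Fix $(z,n)$ and assume $(z,n)\in C_1$, i.e. $\lambda(z)\ge nh$ (the case $C_2$ is symmetric and accounts for $d_2$). Using integral closedness of $P_1$, I pick a reference factorization $m^*$ of $(z,n)$ that uses only lattice points of $P_1$. Given an arbitrary factorization $m=x_1\cdots x_n$, I introduce the potential $\beta(m)=\sum_{k:\lambda(x_k)<h}\bigl(h-\lambda(x_k)\bigr)\ge0$, the total depth of $m$ below $H$.

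The crux, and the source of the exponent $2$, is a single degree-$2$ exchange that strictly lowers $\beta$ whenever $\beta(m)>0$. If some factor $x_i$ lies strictly below $H$, then since $\sum_k\lambda(x_k)=\lambda(z)\ge nh$ some factor $x_j$ lies strictly above $H$. The level-$2$ lattice point $x_i+x_j$ lies in $C_1$ or in $C_2$, and integral closedness of the relevant $P_i$ rewrites it as a sum $y+y'$ of two level-$1$ lattice points lying (weakly) on one side of $H$; the two cases $\lambda(x_i+x_j)\ge 2h$ and $\lambda(x_i+x_j)<2h$ both give $\beta(\{y,y'\})<\beta(\{x_i,x_j\})$ by a one-line computation with $\lambda$, so the replacement $X_{x_i}X_{x_j}\mapsto X_yX_{y'}$ strictly decreases $\beta$. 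As $\beta$ is a nonnegative integer affected only by the two changed factors, iterating these degree-$2$ moves connects $m$ to a factorization with $\beta=0$, i.e. one using only lattice points of $P_1$. Finally I connect this factorization to $m^*$ by a generating set of $I(P_1)$ of degree $\le d_1$; since $M(P_1)\hookrightarrow M(P)$ gives $I(P_1)=I(P)\cap A_{P_1}$, these are legitimate exchanges in $A_P$. The whole chain has degree $\le\max(2,d_1)$, and with the symmetric case $\le\max(2,d_1,d_2)$.

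The main obstacle is exactly this exchange step: one must guarantee that a pair straddling $H$ can always be replaced by a pair on one side of $H$ of strictly smaller depth, and that the replacement is a genuine, nontrivial degree-$2$ relation. Both points rest on $C=C_1\cup C_2$ together with the integral closedness of $P_1$ and $P_2$ (from part (1) and the hypothesis), and I would verify the inequality $\beta(\{y,y'\})<\beta(\{x_i,x_j\})$ explicitly in the two cases; note that the strict decrease itself rules out the trivial replacement $\{y,y'\}=\{x_i,x_j\}$, so no separate nontriviality check is needed.
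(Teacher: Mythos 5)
Your proof is correct, but it takes a genuinely different route from the paper. The paper proves part (2) by Gr\"obner degeneration: it takes the weight $w(x)=|\lambda(x)|$, invokes \cite[Corollary 7.19]{BG} to translate the normality of $P_1$ and $P_2$ (i.e.\ $M(P)\cap H^{\pm}=M(P_{1,2})$) into radicality of $\ini_w(I)$, identifies $\ini_w(I)$ as the defining ideal of the monoidal complex glued from $M(P_1)$ and $M(P_2)$ via \cite[Theorem 7.18]{BG}, observes via \cite[Proposition 7.12]{BG} that this ideal is generated by $I(P_1)$, $I(P_2)$ and the degree~$2$ monomial ideal of the subdivision (a monomial with support in neither piece has a degree~$2$ factor with the same property), and then lifts the degree bound from the initial ideal to $I$. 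You instead argue directly on fiber graphs: the standard criterion that generation in degree $\le D$ is equivalent to connectivity of all fibers under degree $\le D$ exchanges, a potential function $\beta$ measuring total depth below $H$, and the key degree~$2$ exchange on a straddling pair $x_i,x_j$ --- which is exactly the combinatorial shadow of the paper's degree~$2$ monomial ideal of the subdivision, with integral closedness of $P_1,P_2$ supplying the rewriting $x_i+x_j=y+y'$ on one side of $H$ and the one-line $\lambda$-computation giving strict decrease of $\beta$ in both cases --- followed by connecting two all-$P_1$ factorizations using $I(P_1)\subseteq I(P)$ in degree $\le d_1$. Your argument is more elementary and self-contained (it needs no monoidal-complex machinery, only the fiber-graph criterion), whereas the paper's proof is shorter given the \cite{BG} apparatus and yields more: an explicit description of $\ini_w(I)$, hence a flat degeneration of $K[P]$ onto the monoidal complex. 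One small point to tidy: your normalization $h\in\ZZ$ with $\lambda$ primitive is automatic only in the nondegenerate case, where $H\cap P$ is a common facet of the two lattice polytopes $P_1,P_2$ and hence contains lattice points; alternatively, your termination argument survives with $h\in\QQ$ since $\beta$ then takes values in $\frac{1}{q}\ZZ_{\ge 0}$ for a fixed denominator $q$ and each exchange decreases it by at least the minimum of the finitely many positive quantities $|\lambda(x)-h|$, $x\in P\cap\ZZ^d$.
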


\begin{proof}
(1) is obvious.

(2) We define a weight function on the lattice points $x$ of
$P$ (or the generators of $M(P)$) by $w(x)=|\lambda(x)|$ where
$\lambda$ is the primitive integral affine linear form defining
$H$ by the equation $\lambda(x)=0$. This weight ``breaks'' $P$
along $H$, and it breaks the monoid $M(P)$into the monoidal
complex consisting of $M(P_1)$ and $M(P_2)$, glued along $H$.
We refer the reader to \cite[Chapter 7]{BG} for the terminology
just used.

Let $I$ be the toric ideal of $P$. The normality of $P_1$ and
$P_2$ implies that $M(P)\cap H^+=M(P_1)$ and $M(P)\cap
H^-=M(P_2)$. By \cite[Corollary 7.19]{BG} this is equivalent to
the fact that $\ini_w(I)$ is a radical ideal.

Therefore $\ini_w(P)$ is the defining ideal of the monoidal
complex by \cite[Theorem 7.18]{BG}. On the other hand, the
defining ideal of the monoidal complex is generated by the
binomial toric ideals of $P_1$ and $P_2$ and the monomial ideal
representing the subdivision of $P$ along $H$ \cite[Proposition
7.12]{BG}. The latter is of degree $2$ since a monomial with
support not in $P_1$ or $P_2$ must have a factor of degree $2$
with this property (as is always the case by subdivisions along
hyperplane arrangements). This shows that $\ini_w(I)$ is
generated in degrees $\le\max(2,d_1,d_2)$ and it follows that
$I$ itself is generated in degrees $\le \max(2,d_1,d_2)$.
\end{proof}

For a special case, Theorem \ref{robust} is contained in an
unpublished manuscript of Gubeladze and Ho\c{s}ten, but with a
proof using squarefree divisor complexes.

The following counterexample shows that part (1) of the theorem
cannot be reversed, and that part (2) does no longer hold if
one omits the assumption that $P_1$ and $P_2$ are integrally
closed: set
\begin{gather*}
x=(0,0,0),\qquad y=(1,0,0),\qquad z=(0,1,0),\\
v=(1,1,2),\qquad w=(0,0,-1),
\end{gather*}
and let $P$ be the polytope spanned by them. One can easily
check that the given points are the only lattice points of $P$
since $P$ is the union of the simplices $P_1=\conv(x,y,z,v)$
and $P_2=\conv(x,y,z,w)$. The toric ideals of $P_1$ and $P_2$
are both $0$, but $I(P)$ is generated by the binomial
$X_xX_yX_z-X_vX_w^2$. Condition (a) is violated since the
lattice points in $P_1$ do not span $\ZZ^2$ as an affine
lattice.

Let us say that a lattice polytope is \emph{robust} if it
cannot be chiseled into two lattice subpolytopes of the same
dimension along a hyperplane $H$, as described by
\eqref{split}. The robust smooth polytopes $P$ can be
characterized as follows: from each face $F$ of $P$ there
emanates an edge of length~$1$.

\begin{corollary}
\begin{enumerate}
\item In every dimension, a minimal counterexample to the
    normality question is robust.
\item In every dimension, a normal counterexample to
    generation in degree $2$ is robust.
\end{enumerate}
\end{corollary}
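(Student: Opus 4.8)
The plan is to prove both statements by contradiction, combining the chiseling decomposition \eqref{split} with Lemma \ref{Hsmooth} and Theorem \ref{robust}, and running a minimality argument on the number $\#(P\cap\ZZ^d)$ of lattice points. The key quantitative fact I would record first is that chiseling strictly decreases this number for \emph{both} pieces: the face $F$, on which $\sigma=b<c-1$, contributes lattice points lying only in $P_2$, while the vertices realizing the minimum value $\sigma=c>c-1$ lie only in $P_1$. Hence $\#(P_i\cap\ZZ^d)<\#(P\cap\ZZ^d)$ for $i=1,2$, so a minimal counterexample cannot be chiseled without producing strictly smaller smooth polytopes to which minimality applies. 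Since the pieces have the same (full) dimension as $P$, the induction is on lattice-point number at fixed dimension, which is why the conclusion holds in every dimension.

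For part (1), I would take $P$ to be a smooth, non-normal polytope with $\#(P\cap\ZZ^d)$ minimal and assume, for contradiction, that $P$ is not robust. Then \eqref{split} splits $P$ along a hyperplane $H$ into full-dimensional lattice polytopes $P_1,P_2$, which are smooth by Lemma \ref{Hsmooth}. As each $P_i$ has fewer lattice points than $P$, minimality forces $P_1$ and $P_2$ to be integrally closed, and Theorem \ref{robust}(1) then yields that $P$ is integrally closed---contradicting the choice of $P$. Thus $P$ is robust.

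For part (2), I would take $P$ to be a normal counterexample to generation in degree $2$, again with $\#(P\cap\ZZ^d)$ minimal (and minimal, more precisely, among smooth polytopes violating either (N) or (Q)), and suppose $P$ is not robust. Chiseling and Lemma \ref{Hsmooth} produce smooth full-dimensional pieces $P_1,P_2$ with strictly fewer lattice points. By minimality each $P_i$ is integrally closed and its toric ideal is generated in degree $\le 2$; Theorem \ref{robust}(2), applied with $d_1=d_2=2$, then forces $I(P)$ to be generated in degree $\le\max(2,2,2)=2$, contradicting that $P$ is a counterexample to (Q). Hence $P$ is robust.

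The step I expect to be the main obstacle is verifying that the two chiseled pieces in part (2) are integrally closed, which is exactly the hypothesis required to invoke Theorem \ref{robust}(2). Normality of $P$ does not transfer to $P_1$ and $P_2$ on its own: part (1) of Theorem \ref{robust} cannot be reversed, as the explicit five-point example following it demonstrates. This is where the word \emph{normal} in the statement, together with choosing the counterexample minimal across both (N) and (Q), is essential---it rules out a smaller smooth non-normal piece (which would itself be a smaller counterexample to normality) and thereby licenses the degree-$2$ conclusion of Theorem \ref{robust}(2).
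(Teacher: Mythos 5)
Your proof is correct and follows exactly the route the paper intends: the corollary is stated as an immediate consequence of Lemma \ref{Hsmooth} and Theorem \ref{robust} via a minimality argument on chiseling, and your count showing both pieces lose lattice points, together with reading ``minimal'' as minimal among counterexamples to (N) \emph{or} (Q) (needed precisely because Theorem \ref{robust}(1) cannot be reversed), is the intended filling-in of the paper's ``it is then not hard to see.''
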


Chiseling has been used in two ways: (1) to reduce the size of
polytopes that had been computed from unimodular fans, and (2)
to produce polytopes from large smooth polytopes by chiseling
them parallel to faces chosen in some random order.

\begin{remark}
In our search, we only investigate $P_1$ further, although we
cannot exclude that ``bad'' properties of $P$ come from $P_2$.
Neglecting $P_2$ is however justified if the face $F$ is a
vertex. In this case $P_2$ is a multiple of the unit simplex
and $I(P_2)$ is generated in degree $2$ since $K[M(P_2)]$ is a
Veronese subalgebra of a polynomial ring.
\end{remark}

\section{Miscellaneous properties of smooth polytopes}
\subsection{Superconnectivity and strong connectivity}
\label{superconn} Let $v$ be a vertex of $P$. A
\emph{$\Hilb_v$-path} is a sequence of lattice points
$v=x_0,x_1,\dots,x_m$ in $P$ such that $x_{i+1}-x_i\in
\Hilb(M(P)_v)$ for all $i=0,\dots,m-1$. We say that $P$ is
\emph{superconnected} if every lattice point in $P$ is
connected to every vertex $v$ by a $\Hilb_v$-path. However,
superconnectivity is rarely satisfied, and even in dimension
$2$ one easily finds counterexamples. Consider the smooth
polygon with vertices $(0,0)$, $(4,0)$, $(4,1)$, $(3,3)$,
$(2,4)$, and $(0,4)$:
$$
\psset{unit=0.7cm,arrowsize=2.5pt 4}
\def\vertex{\pscircle[fillstyle=solid,fillcolor=black]{0.11}}
\def\overtex{\pscircle{0.11}}
\begin{pspicture}(0,0)(4,4)
 \pspolygon(0,0)(4,0)(4,1)(3,3)(2,4)(0,4)
 \rput(0,0){\vertex}
 \rput(4,0){\vertex}
 \rput(4,1){\vertex}
 \rput(3,3){\vertex}
 \rput(3.3,3){$v$}
 \rput(2,4){\vertex}
 \rput(0,4){\vertex}
 \psline{->}(3,3)(2,4)(3,2)(4,0)(3,1)(2,2)(3,0)
 \rput(3,2){\overtex}
 \rput(3,1){\overtex}
 \rput(2,2){\overtex}
 \rput(3,0){\overtex}
\end{pspicture}
$$
The lattice point $(3,0)$ is reachable by a $\Hilb_v$-path from
$v=(3,3)$, but the origin is not.

On the other hand, every smooth polytope encountered in the
search satisfies the following weaker condition: for each
lattice point $x$ there exists at least one vertex $v$ to which
$x$ is connected by a $\Hilb_v$-path. We call such polytopes
\emph{strongly connected}. If strong connectivity should fail
for a smooth polytope $P$, then $P$ has a lattice point
$x\notin\bigcup_v U_v(P)$, and in particular $P\neq\bigcup_v
U_v(P)$ (compare \ref{CornExt}).

Superconnectivity of $P$ is equivalent to the following
condition: the lattice point $y$ in condition (2)(a) of Theorem
\ref{scheme2} can always be chosen among the neighbors of $v$.
Therefore superconnectivity can be considered to be a strong
form scheme-theoretic generation in degree $2$ for
\emph{smooth} polytopes without non-vertex lattice points.

The matroid polytopes discussed briefly in Remark \ref{matroid}
are superconnected since the symmetric exchange of single
elements produces neighbors.

\subsection{Positivity of coefficients of the Ehrhart
polynomial} Since we are computing the Ehrhart series of smooth
polytopes anyway, checking the Ehrhart polynomial for
positivity of its coefficients costs no extra time. In fact,
for all smooth polytopes found in our search these coefficients
proved to be positive. Therefore it seems reasonable to ask the
following question:

\begin{question}
Do the Ehrhart polynomials of smooth polytopes have positive
coefficients?
\end{question}

See De Loera, Haws and Köppe \cite{DHK} for a discussion of the
positivity question for another class of polytopes.

\section{Very ample polytopes}\label{VeryA}

Smooth polytopes can be considered as special instances of the
following class:
\begin{enumerate}
\item[(HC)] for each vertex $v$ of the (simple) polytope
    $P$ the points $x$ such that $x-v\in\Hilb(M(P)_v)$ are
    contained in the polytope spanned by $v$ and its
    neighbors.
\end{enumerate}
Such polytopes $P$ are automatically very ample, provided the
lattice points in $P$ span $\ZZ^d$ affinely. (As usual, a
polytope is \emph{simple} if exactly $d$ hyperplanes meet in
each of its vertices.)

Generalizing question (N), one may ask whether polytopes of
class (HC) are normal. As the counterexample below shows, the
answer is ``no'' for non-simple polytopes, but seems to be
unknown for simple ones. In fact, we do not know of any simple
very ample, but non-normal polytope.

The question (HC) fits into a line of research that relates
normality of polytopes to the length of their edges; see
Gubeladze \cite{Gub} for an edge length bound guaranteeing
normality.

\subsection{Very ample non-normal polytopes}
The first example of a very ample, but non-normal lattice
polytope was given in \cite{BG:pol}. Its vertices are the
$0$-$1$-vectors representing the triangles of the minimal
triangulation of the real projective plane. The polytope has
dimension $5$.

Very ample non-normal polytopes can easily be found by
\emph{shrinking}. One starts from a normal polytope $P$,
chooses a vertex $v$, and checks whether the polytope $Q$
spanned by $(P\cap\ZZ^d)\setminus\{v\}$ is very ample. If so,
$P$ is replaced by $Q$. If not, we test another vertex of $P$.
If no vertex of $P$ can be removed without violating very
ampleness for $Q$, one stops at $P$. The very ample polytopes
encountered in the process are checked for normality, and
surprisingly often non-normal very ample polytopes pop up, and
even smooth ones do. (This technique was originally applied to
find normal polytopes without unimodular cover; see
\cite{BG:cov}.)

By shrinking we found the following polytope: $P\subset\RR^3$
is the convex hull of
$$
\bigl((0,0)\times I_1\bigr)\cup \bigl((0,1)\times I_2\bigr)\cup
\bigl((1,1)\times I_3\bigr)\cup \bigl((1,0)\times I_4\bigr).
$$
with $ I_1=\{0,1\}$, $I_2=\{2,3\}$, $I_3=\{1,2\}$,
$I_4=\{3,4\}$ (see \cite[Exerc.\ 2.24]{BG}). This polytope has
$4$ unimodular corner cones and $4$ non-simple ones. One can
check by hand that at each vertex $v$ the vectors $w-v$, $w$ a
neighbor of $v$, form $\Hilb(M(P)_v)$. This example has
recently been generalized by Ogata \cite{Ogata} in several
ways. Very ampleness for these polytopes can be proved by
applying the following criterion to each of the non-unimodular
corner cones:

\begin{proposition}
Let $C$ be a rational cone of dimension $d$ generated by $d+1$
vectors $w_1,\dots,w_{d+1}$. For each facet $F$ of $C$ suppose
that the $w_i \in F$ together with one of the (at most two)
remaining ones generate $\ZZ^d$. Then $w_1,\dots,w_{d+1}$ are
the Hilbert basis of $C$.
\end{proposition}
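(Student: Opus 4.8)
The plan is to reduce the statement to the unimodularity of one of the two triangulations of $C$ that use no rays beyond $w_1,\dots,w_{d+1}$, and then to cover $C$ by the resulting unimodular simplicial cones. First I would record the combinatorial type of $C$. Since $C$ is $d$-dimensional and is spanned by $d+1$ vectors, there is, up to a scalar, a unique linear relation among the $w_i$; setting aside the degenerate cases I may assume that every $w_i$ is an extreme ray, so that no coefficient vanishes and the relation can be written as $\sum_{i\in I_+}\alpha_i w_i=\sum_{j\in I_-}\beta_j w_j$ with $\alpha_i,\beta_j>0$ and $|I_+|,|I_-|\ge 2$. Applying a facet-defining functional to this relation then shows that the facets of $C$ are exactly the cones $F_{ij}=\operatorname{cone}(w_k:k\neq i,j)$ with $(i,j)\in I_+\times I_-$: each is spanned by $d-1$ of the generators, and the two generators \emph{not} on $F_{ij}$ are precisely $w_i$ and $w_j$. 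This is exactly the situation of the hypothesis, in which at most two generators remain.

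The key observation is that the two admissible completions of a facet are the two simplicial cones adjacent to it: adding $w_j$ back to $F_{ij}$ produces $\sigma_i:=\operatorname{cone}(w_k:k\neq i)$, while adding $w_i$ back produces $\tau_j:=\operatorname{cone}(w_k:k\neq j)$. As $i$ ranges over $I_+$ the cones $\sigma_i$ form one triangulation of $C$, and as $j$ ranges over $I_-$ the cones $\tau_j$ form the other; I would verify directly that $C=\bigcup_{i\in I_+}\sigma_i=\bigcup_{j\in I_-}\tau_j$ by pushing a representation $x=\sum_k c_k w_k$ of a point of $C$ along the relation in either direction until one of the coordinates $c_k$ drops to $0$. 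This covering is in fact all I need. Now unimodularity of $\sigma_i$ (resp.\ $\tau_j$) is exactly the assertion that the $d$ vectors $w_k$, $k\neq i$ (resp.\ $k\neq j$), generate $\ZZ^d$, so the hypothesis translates into: for every $(i,j)\in I_+\times I_-$, at least one of $\sigma_i,\tau_j$ is unimodular.

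From here the argument is a clean dichotomy. If some $\sigma_{i_1}$ and some $\tau_{j_1}$ both failed to be unimodular, then the facet $F_{i_1 j_1}$ would violate the hypothesis; hence either all $\sigma_i$, $i\in I_+$, are unimodular, or all $\tau_j$, $j\in I_-$, are. In either case $C$ is covered by unimodular simplicial cones whose rays come from $\{w_1,\dots,w_{d+1}\}$, so every lattice point of $C$ lies in such a simplex and is therefore a nonnegative integer combination of the $w_k$; thus the $w_k$ generate the monoid $C\cap\ZZ^d$. Finally, because $|I_+|,|I_-|\ge 2$, each $w_k$ occurs among the generators of one of these unimodular simplices and is in particular primitive; being primitive and lying on an extreme ray, it is irreducible in $C\cap\ZZ^d$ and cannot be omitted from any generating set. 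Consequently $w_1,\dots,w_{d+1}$ is precisely the Hilbert basis.

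I expect the only delicate step to be the facet bookkeeping of the second paragraph: identifying that each facet omits exactly one generator from $I_+$ and one from $I_-$, and that its two unimodular completions are the two adjacent members of the two triangulations. Once this correspondence is in place, the dichotomy and the covering argument are routine. The degenerate situations, namely $|I_+|=1$ (where the relation forces one generator into the interior of the cone spanned by the others) or a repeated generator, would have to be excluded or discussed separately, since there one of the $w_i$ is redundant and the literal claim that the full list $w_1,\dots,w_{d+1}$ is the Hilbert basis can fail.
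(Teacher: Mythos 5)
Your route is genuinely different from the paper's. The paper argues by Serre's criterion: the monoid $M$ generated by $w_1,\dots,w_{d+1}$ has a $d$-dimensional affine monoid algebra with $d+1$ generators, hence a hypersurface ring, hence Cohen--Macaulay, which gives $(S_2)$; the facet hypothesis gives $(R_1)$ (compare \cite[Exerc.\ 4.16]{BG}); so $M$ is normal, and since $C\cap\ZZ^d$ is integral over $M$ and the $w_i$ generate $\ZZ^d$, the two monoids coincide. You instead use the circuit geometry of $d+1$ spanning vectors: the two triangulations $\{\sigma_i\}_{i\in I_+}$ and $\{\tau_j\}_{j\in I_-}$ attached to the unique relation, the identification of facets with pairs $(i,j)\in I_+\times I_-$, and the dichotomy forcing one whole family to be unimodular. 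When the relation has full support this is correct (any $d$ of the $w_k$ are then independent, so each $F_{ij}$ is simplicial, and evaluating a facet functional on the relation gives the sign condition), and it is even constructive: it produces a unimodular triangulation, which is more than normality.

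However, there is a genuine gap in your reduction: the claim that ``every $w_i$ is an extreme ray, so that no coefficient vanishes'' is false for $d\ge 4$. Take $w_1=e_1$, $w_2=e_2$, $w_3=e_3$, $w_4=(1,1,-1,0)$, $w_5=e_4$ in $\ZZ^4$: the unique relation is $w_1+w_2-w_3-w_4=0$, so the coefficient of $w_5$ vanishes, yet the cone is pointed, all five generators are extreme and irredundant, and the hypothesis of the proposition holds. Note the facet $\operatorname{cone}(w_1,w_2,w_3,w_4)$: it contains $d$ of the generators, so only \emph{one} remains---the phrase ``at most two'' in the statement is there precisely to admit such facets, which your bookkeeping (every facet omits one index from $I_+$ and one from $I_-$) rules out. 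These configurations are not among the degeneracies you set aside at the end: no generator is redundant and the conclusion is true for them, so as written your proof does not cover the full statement. It can be repaired: if some coefficients vanish, the generators with nonzero coefficient span a facet $F_0$, the hypothesis at $F_0$ shows that each transverse generator sits at lattice height $1$ over $F_0$ and that the hypothesis descends to $F_0$ in the hyperplane lattice, and one inducts down to the circuit case---but that is exactly the case analysis the paper's algebraic proof avoids. Your closing caveat, by contrast, is correct and worth keeping: with a redundant generator the literal conclusion does fail (in $\ZZ^2$ the generators $e_1,e_2,e_1+e_2$ satisfy the facet hypothesis, but $e_1+e_2$ is not in the Hilbert basis), so the statement tacitly assumes the $w_i$ are the extreme generators; just be aware that this assumption alone does not force full support of the relation.
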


\begin{proof}
The hypothesis guarantees that $w_1,\dots,w_{d+1}$ generate
$\ZZ^d$. Moreover, the monoid $C\cap\ZZ^d$ is integral over the
monoid $M$ generated by the $w_i$. Therefore it is enough to
show that $M$ is normal. The hypothesis on the generation of
$\ZZ^d$ by the $w_i\in F$ and \emph{one} additional vector
implies that the monoid algebra $K[M]$ satisfies Serre's
condition $(R_1)$ (compare \cite[Exerc. 4.16]{BG}). Serre's
condition $(S_2)$ is satisfied since an affine domain of
dimension $d$ generated by $d+1$ elements is Cohen-Macaulay.
Normality is equivalent to $(R_1)$ and $(S_2)$.
\end{proof}

\subsection{The search for very ample simple polytopes}
Finding random simple polytopes has turned out as difficult as
finding random smooth polytopes. Constructing such polytopes
from simplicial fans follows the algorithm outlined in Section
\ref{Fans}, except that one does not refine a simplicial cone
to a unimodular one. However, the property (LP) now comes into
play, and the ``right hand sides"' $b$ that yield lattice
polytopes (and not just rational ones) form a proper sublattice
$\cC$ of $\cW$. It is not hard to describe $\cC$ by congruences
that its members must satisfy. However, often the Hilbert basis
computation is arithmetically much more complicated than for
unimodular fans, and the way out described in Remark
\ref{Pract} does not work well.

Despite of the fact that simple lattice polytopes are usually
not normal, those that we have obtained from fans have all been
normal. This fact reveals the most problematic aspect of our
search: creating polytopes from random simplicial or even
smooth fans seems to produce only harmless examples since one
cannot reach the complication, arithmetically or
combinatorially, that $\cN(P)$ needs for $P$ to be non-normal.
It should be much more promising to define polytopes in terms
of their vertices.

Simplices are the only class of simple polytopes that can
easily be produced by choosing vertices at random. In dimension
$\ge 3$ they are usually non-normal, but we have not yet been
able to find a very ample such simplex. The only result known
to us that indicates that simplices are special in regard to
very ampleness is a theorem of Ogata \cite{OgaWeight}: if $P$
is a very ample simplex of dimension $d$, then the multiples
$cP$ are normal for $c\ge n/2-1$. In particular, very ample
$3$-simplices are normal.

\end{document}